\documentclass[options]{amsart}
\usepackage{latexsym,ifthen,amssymb}
\usepackage[toc,page,title,titletoc,header]{appendix}
\usepackage{enumerate}
\usepackage{graphicx}
\usepackage{bm}
\usepackage{subfigure}
\usepackage{float}

\usepackage{tikz-cd}
\usepackage{caption}

\usepackage{multirow}
\usepackage{mathrsfs}

\usepackage{hyperref}

\usepackage{cleveref}
\newtheorem{theorem}{Theorem}[section]
\newtheorem{lemma}[theorem]{Lemma}

\newtheorem{claim}[theorem]{Claim}

\newtheorem{proposition}[theorem]{Proposition}

\theoremstyle{definition}

\theoremstyle{remark}

\numberwithin{equation}{section}

\theoremstyle{noparens}
\newtheorem*{question*}{Question}
\newtheorem*{theorem*}{Theorem}

\def\uhr{\upharpoonright}
\def\h{\hat}
 \def\xor{\Delta }

 \title{There is no maximal $K$-degree}

 \author{Lu Liu}
\address{School of Mathematics and Statistics,
Central South University,
City Changsha, Hunan Province,
China. 410083}
\thanks{Author is supported by  NSFC grant 12471002.}
\email{g.jiayi.liu@gmail.com}
\subjclass[2010]{Primary  03D80; Secondary 68Q30 03D32}
\keywords{computability theory, algorithmic randomness theory,  Martin-L\"{o}f randomness, left c.e. supermartingale.}

\begin{document}
\maketitle
\begin{abstract}
The Kolmogorov complexity of a string characterize how complex it is 
to describe the string. If every prefix of a real $x$ is 
more complex to describe than every prefix
(of the same length) of real $y$, then it is seen as $x$ is more 
complex to describe than $y$.
It is wondered if there is a real $x$ so that   no other reals
 are strictly more complex (to describe) than $x$.
The behavior of 
Kolmogorov complexity functions generated by reals 
(namely $n\mapsto$ the minimal description length of the real)
 is quite chaos.
 Therefore, it is widely believed that 
 there are many reals that are maximally complex to describe.
 For instance, it is conjectured that all random enough reals
 have maximal $K$-degree. 
 In this paper, it is shown that there is no real with maximal $K$-degree.
Actually, for almost all real $x$,
we can uniformly computably  find another real whose 
$K$-degree is strictly
above $x$.

\end{abstract}

\section{Introduction}
Kolmogorov complexity assigns to a finite object the length of its shortest
effective description.  The underlying invariance principle goes back to
Kolmogorov, Solomonoff, and Levin; the prefix-free version, 
was developed by Levin and Chaitin
\cite{Solomonoff64,Kolmogorov65,ZL70,Chaitin75}.  Fix an optimal prefix-free
machine $U$, and let
\[
 K(\sigma)=\min\{|p|:U(p)=\sigma\}
\]
for $\sigma\in 2^{<\omega}$.  Replacing $U$ by another optimal prefix-free
machine changes $K$ by at most an additive constant.  Prefix-free complexity
also provides a fundamental bridge between finite descriptions and randomness
of infinite sequences: by the Levin--Schnorr theorem, a real $x\in 2^\omega$
is Martin-L\"of random if and only if  \cite{ML66,Schnorr71,Chaitin75}
$$
 K(x\uhr n)\geq n-O(1).
$$
  We write \emph{1-random}
    for Martin-L\"of random.  Standard references for  background in computable randomness theory include
\cite{Nies09,DH10}.

The sequence $n\mapsto K(z\uhr n)$ contains substantially finer information
than the single assertion that $z$ is random.  For every $\sigma$ one has
$K(\sigma)\leq |\sigma|+K(|\sigma|)+O(1)$, but even for random $z$ the distance
of $K(z\uhr n)$ from the natural upper and lower bounds oscillates.  Precise
forms of these upward and downward oscillations were obtained by Miller and Yu
\cite{MY11}.  At the stronger randomness level, Miller proved that $z$ is
2-random exactly when its prefix-free initial-segment complexity is infinitely
often maximal, in the sense that
\[
 K(z\uhr n)\geq n+K(n)-O(1)
\]
for infinitely many $n$ \cite{Miller09}.  These results illustrate both the
strength and the irregularity of comparisons based on all initial segments.

For $z,\h z\in 2^\omega$, define
\[
 z\leq_K \h z \quad\Longleftrightarrow\quad
  \ K(z\uhr n)\leq K(\h z\uhr n)+O(1).
\]
Thus $\h z$ is at least as complex as $z$, up to an additive constant, at
every initial-segment length.  Write $z=_K \h z$ when both $z\leq_K \h z$ and
$\h z\leq_K z$, and write $z<_K \h z$ when $z\leq_K \h z$ but $\h z\not\leq_K z$.
The relation $\leq_K$ is obviously a partial order and $=_K$ a equivalence relation.
Thus the relation $=_K$ give rise a degree notion, namely 
let $[z]_K$ denote the $=_K$-equivalence class $\{\h z\in 2^\omega: \h z=_K z\}$.
This is referred as
\emph{$K$-degree} of $z$.  This way of comparing reals is implicit in Solovay's work
\cite{Solovay75} and was introduced and systematically studied by Downey,
Hirschfeldt, and LaForte \cite{DHL04}.  The least $K$-degree consists of the
$K$-trivial reals, namely those $z$ satisfying
$$K(z\uhr n)\leq K(n)+O(1).$$  Beyond the least degree, the partial order $\leq_K$ has a rich  
structure.  
Yu, Ding and Downey \cite{yu2004kolmogorov} showed that there is no greatest $K$-degree. 
Meanwhile the 1-random $K$-degree
cannot be minimal  \cite{MY11}. 
Barmpalias and Vlek \cite[Theorem~5.6]{BV11}
 constructed reals $z$ that are sufficiently beyond $K$-trivial
but (whose $K$-degree) are far below 1-random such that 
there are continuum many reals with $K$-degree strictly below that of $z$.
Despite being a rich structure, the $K$-complexity function
of a real $z$ (namely $n\mapsto K(z\uhr n)$)
cannot be chosen freely. For example,
 if $z_0\oplus z_1$ is 1-random, then
$z_0$ and $z_1$ have no common $K$-upper bound \cite{MY08}; 
there are comparable 1-random $K$-degrees, and the cones inside the random
degrees can have   different cardinalities \cite{MY11}. 
  Small upper cones above highly
random reals---the cone above every 2-random real is countable
\cite{Miller09}.
This result seems to indicate that it is hard to construct reals with 
$K$-degree strictly above a 2-random real.
 Therefore \cite{Miller09} conjectured  that all 2-random reals have maximal $K$-degree.
Here
 a $K$-degree $[z]_K$ is \emph{maximal} if there is no real $\h z\in 2^\omega$ such that
$z<_K \h z$.  Miller and Nies \cite[Question~9.7]{MN06} asked if there exists a maximal
$K$-degree.
The main result of this paper is to show that maximal $K$-degree does not exist.
i.e., Every real $z$ admits a $K$-degree strictly above that of $z$.
\begin{theorem}
\label{maxKth0}
There is no maximal $K$-degree.
\end{theorem}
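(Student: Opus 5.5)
Given $z$, we must produce $\h z$ with $z\le_K\h z$ and $\h z\not\le_K z$. The first reduction is by cases. If $z$ is not 1-random, then $K(z\uhr n)-n\to-\infty$ along a subsequence. We then take $\h z$ to be a real with $K(\h z\uhr n)\ge n+K(n)-O(1)$ infinitely often and $K(\h z\uhr n)\ge n-O(1)$ always, for instance a 2-random real by Miller's theorem \cite{Miller09}. Then $\h z\not\le_K z$ is immediate. The inequality $z\le_K\h z$ is not automatic, however, because $K(z\uhr n)$ can exceed $n$ by up to $K(n)$. So in this case we instead build $\h z$ by coding: let $\h z$ agree with $z$ except that on a sparse computable set of blocks it is overwritten with bits of a random $r$ chosen relative to $z$. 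This costs only a small additive loss in describing $z\uhr n$ from $\h z\uhr n$, and it raises the complexity on the blocks where $z$ dips. Making that loss $O(1)$ is where care is needed.

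The substantial case is $z$ 1-random, and this is what the abstract describes: for almost every $z$ we should uniformly compute $\h z$ with $z<_K\h z$. My plan is to find a left-c.e. supermartingale formulation, as the keywords suggest. By coding theory, $K(\sigma)\approx-\log m(\sigma)$, so $z\le_K\h z$ means $m(\h z\uhr n)\le c\,m(z\uhr n)$ for all $n$. I would write $\h z=F(z)$ for a computable, length-preserving (or length-bounded) map $F$ that is injective on prefixes up to bounded ambiguity. Then:

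\begin{enumerate}[(i)]
\item Define the push-forward $m'(\tau)=\sum_{F(\sigma)=\tau}m(\sigma)$. This is a left-c.e. semimeasure, so $m(\tau)\ge c\,m'(\tau)\ge c\,m(\sigma)$. That inequality only gives $K(F(z)\uhr n)\le K(z\uhr n)+O(1)$, which is the wrong direction.
\item So $F$ must be chosen so that from $\h z\uhr n$ one cannot cheaply recover $z\uhr n$, while every description of $\h z\uhr n$ still yields a description of $z\uhr n$. Equivalently, $F$ must be such that $z\uhr n$ is computable from $\h z\uhr n$ with $O(1)$ advice. The natural choice is $\h z=z$ with extra information interleaved, for example $\h z\uhr n=z\uhr(n-g(n))$ followed by $g(n)$ bits of a sequence $r$ that is random relative to $z$, with $g$ slowly growing and unbounded. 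Then $z\uhr(n-g(n))$ is recoverable from $\h z\uhr n$, so $K(z\uhr(n-g(n)))\le K(\h z\uhr n)+O(1)$.
\end{enumerate}

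The key estimate is then to compare $K(z\uhr n)$ with $K(z\uhr(n-g(n)))+g(n)$. Since the appended $r$-bits are incompressible relative to $z$, $K(\h z\uhr n)\ge K(z\uhr(n-g(n)))+g(n)-O(1)$. By subadditivity, $K(z\uhr n)\le K(z\uhr(n-g(n)))+g(n)+K(g(n))+O(1)$, which leaves a loss of $K(g(n))$. To remove it, $g$ should be constant on long intervals and change only at very sparse lengths $n_k$. I would then use a supermartingale or Solovay-test argument over $z$ to show that for almost every $z$ the term $K(g(n))$ is absorbed. This works because 1-random $z$ typically have $K(z\uhr n)$ well below $n+K(n)$ at most $n$, by the Miller--Yu oscillation results \cite{MY11}. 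For strictness, at the lengths $n_k$ where $z\uhr n_k$ is nearly maximally complex, $\h z$ gains about $g(n_k)\to\infty$. For the remaining $z$ (the measure-zero exceptions, including those that are not 1-random) I would handle them by a separate direct construction, or by a relativization trick that replaces $z$ with a random real in the same $K$-degree.

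The main obstacle is the additive-constant bookkeeping in the key estimate. Showing that $z\le_K\h z$ holds at every $n$ and not merely infinitely often requires a single left-c.e. supermartingale whose capital bounds $2^{K(\h z\uhr n)-K(z\uhr n)}$ from below uniformly. I would try to build it first as a weighted sum over the blocks $[n_k,n_{k+1})$, with weights $2^{-k}$.
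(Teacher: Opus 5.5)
There is a genuine gap. The proposal closes none of its cases, and the non-random reals, which the theorem also needs, are left completely open.

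\textbf{The non-random case.} You correctly see that a 2-random $\h z$ need not satisfy $z\leq_K\h z$, but the overwriting fix does not work as described. The overwritten bits of $z$ are lost, so describing $z\uhr n$ from $\h z\uhr n$ must pay for them. That cost is unbounded unless only finitely many bits change, and in that case $\h z=_K z$. The fallback of passing to a random real in the same $K$-degree is impossible: $z=_K r$ with $r$ 1-random gives $K(z\uhr n)\geq n-O(1)$, so $z$ is already 1-random.

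\textbf{The random case.} The formula $\h z\uhr n=z\uhr(n-g(n))\,r\uhr g(n)$ is not a coherent family of prefixes; it has to mean inserting $r(k)$ at sparse positions $n_k$. Read that way, $K(z\uhr n)\leq K(z\uhr(n-g(n)))+g(n)+O(1)$ holds with no $K(g(n))$ term, for $g$ increasing in unit steps at sparse lengths, because $n$ (hence $g(n)$) is recoverable from $n-g(n)$ with one extra bit. So the loss you plan to absorb is not the obstacle. The real obstacle is the lower bound $K(\h z\uhr n)\geq K(z\uhr(n-g(n)))+g(n)-O(1)$ at every $n$, and that bound fails with unbounded deficit. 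Once the intervals where $g\equiv g_0$ are long enough, a 1-random $z$ eventually has $r\uhr g_0$ as the last $g_0$ bits of some $z\uhr l$ with $l=n-g_0$ in that range. At such $n$, $K(\h z\uhr n)\leq K(z\uhr l)+K(g_0)+O(1)$. Whether $z\leq_K\h z$ still holds then depends on fine oscillations of $n\mapsto K(z\uhr n)$, which your supermartingale was supposed to handle but is never constructed.

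\textbf{Strictness.} The strictness argument cannot work as stated. Every real satisfies $K(\h z\uhr n)\leq n+K(n)+O(1)$, so at lengths where $z\uhr n$ is nearly maximally complex no $\h z$ can gain anything.

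\textbf{The missing idea.} It is half-present in your step (ii): you need $z\uhr n$ recoverable from $\h z\uhr n$ with $O(1)$ advice. The push-forward in (i) only constrains $F(z)\uhr n$ at lengths where $F(z)\uhr n$ is computable from $z\uhr n$. Take instead a length-preserving map that permutes each block $2^{m_i}\times 2^{m_i}$ and looks ahead inside the block. It has bounded pre-images on every prefix, so $z\leq_K F(z)$ holds for every $z$ for free. Yet mid-block, $F(z)\uhr n$ can carry information from later in the block and be more complex than $z\uhr n$.

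The paper uses $2^k$ such maps $f_a$: XOR $a$ into the first $k$ bits of each $x_i$, then send $(0^m,y)\mapsto(y,0^m)$, with $m_i\approx\log_2 i$. It proves a dichotomy rather than strictness directly. Suppose $f_a(z)\leq_K z$ for all $a$. Then each rooted block ($x_i=a0^{m_i-k}$) gives $K(z_{\leq i})\leq K(z_{<i})+O(1)$, a drop of about $2m_i$ in $\Delta_K$. A stretch $(i,j)$ with no rooted block is compressible by about $2^{k-2}(m_j-m_i)$ bits, because $\sum_i 2^k/2^{m_i}$ diverges. The $2m_j$ drop at a rooted $j$ pays for the roughly $\frac32 m_j$ bits needed to name $j$. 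Hence $\Delta_K(z_{\leq i})+2^{k-2}m_i$ falls by about $m_i/2$ from one rooted block to the next, and $\Delta_K(z\uhr n)\to-\infty$ as a genuine limit.

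In that case every 1-random real is strictly above $z$, which exactly resolves the difficulty you identified for non-random $z$. Otherwise some $f_a(z)$ is strictly above $z$. This treats every real uniformly, with no relative randomness and no exceptional null set.
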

Moreover,
on the measure-one class
of 1-random reals, 
we can uniformly computably find a real with strictly higher $K$-degree.
\begin{theorem}\label{maxKth1}
There are $k^*\in\omega$ and total \footnote{Meaning, the Turing functional is
total on every oracle.} Turing functionals
$\Psi_1,\ldots,\Psi_{k^*}$ such that, for every 1-random real
$z\in 2^\omega$,  
$z<_K\Psi_{\h k}(z)$ for some $\h k\leq k^*$.
\end{theorem}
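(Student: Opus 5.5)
We construct, uniformly from a 1-random $z$, finitely many candidate reals $\Psi_1(z),\dots,\Psi_{k^*}(z)$. Each candidate copies $z$ except at sparse, computably chosen positions, where it is modified so as to \emph{add} information. Two things have to be shown:
\begin{itemize}
\item $K(z\uhr n)\le K(\Psi_{\h k}(z)\uhr n)+O(1)$ for all $n$, which gives $z\le_K\Psi_{\h k}(z)$;
\item $K(\Psi_{\h k}(z)\uhr n)-K(z\uhr n)$ is unbounded along some $n$, which gives $\Psi_{\h k}(z)\not\le_K z$.
\end{itemize}
The natural template is to pick a computable sparse set of block boundaries $n_0<n_1<\cdots$. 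On block $[n_i,n_{i+1})$ the real $\Psi_{\h k}(z)$ consists of $z$ on that block, compressed or re-encoded, followed by some extra bits drawn from a later part of $z$ (or from a second half $z_1$ of $z=z_0\oplus z_1$). The aim is that $\Psi_{\h k}(z)\uhr n$ computes $z\uhr n$ with $O(1)$ advice, and also carries roughly $\log$ extra bits of randomness at the boundaries. For 1-random $z$ we have $K(z\uhr n)\ge n-O(1)$ and $K(z\uhr n)\le n+K(n)+O(1)$. So the room for $\Psi(z)$ to be strictly more complex lies in the gap $K(n)$, and in the places where $K(z\uhr n)$ dips below $n+K(n)$ (the oscillations of Miller--Yu \cite{MY11}).

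\textbf{Step 1: the upper bound.} I would first make the domination uniform: $z\uhr n$ should be obtainable from $\Psi(z)\uhr n$ by a computable map $f$ with $f(\Psi(z)\uhr n)=z\uhr n$. Then $K(z\uhr n)\le K(\Psi(z)\uhr n)+O(1)$ holds automatically. This forces $\Psi(z)$ to use no more length than $z$ to encode $z\uhr n$. The encoding must therefore be a genuine compression of $z$, and compression is unavailable for random $z$ in general. The way around this is to exploit the gap $K(z\uhr n)< n+K(n)$. Where $z$ has a prefix of low complexity, a left-c.e.\ supermartingale (the keyword in the paper) can bet on $z$ and identify where it is compressible. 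At those points $\Psi$ inserts fresh random bits taken from further along in $z$.

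\textbf{Step 2: the strict separation.} For the second bullet I would use the supermartingale $d(\sigma)=2^{|\sigma|-K(\sigma)}$, or a smoothed version of it. It is left-c.e., and it is bounded along 1-random $z$ only in a weak sense. I would choose the insertion locations computably in terms of stages of the approximation to $K$, which is why each $\Psi_{\h k}$ is total. The argument then needs a dichotomy into finitely many cases according to how the growth of $d$ along $z$ behaves, for instance which of $k^*$ residue patterns the "drops" of $K(z\uhr n)$ follow. In each case the corresponding $\Psi_{\h k}$ succeeds: it satisfies $K(\Psi_{\h k}(z)\uhr n)\ge K(z\uhr n)+c$ for every $c$ at infinitely many $n$, while Step 1 continues to hold. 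The separation itself comes from a van Lambalgen-type argument. The inserted bits are random relative to the rest, so a short description of $\Psi(z)\uhr n$ would yield a short description of $z\uhr n$ together with the inserted bits, and this contradicts randomness once the inserted bits are more than $O(1)$.

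\textbf{Main obstacle.} I expect the hardest step to be reconciling the two bullets with a \emph{computable} functional. The positions where $z$ is compressible are only approximable from the left, so $\Psi$ cannot know them. It may insert bits at a wrong place, which costs length and breaks the upper bound. This is exactly why a single $\Psi$ probably fails and $k^*$ guesses are used. I would first try a counting argument: a measure or Kraft-sum bound shows that a wrong guess can be charged against the supermartingale only boundedly often. The bounded capital then bounds the number of cases needed. Theorem~\ref{maxKth0} then follows by combining this with the non-random case. For non-random $z$ one can simply use $z\oplus$-style joins with a random real, or cite \cite{yu2004kolmogorov}-type constructions.
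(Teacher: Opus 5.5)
Your proposal does not yet contain a proof. No functional $\Psi_{\hat k}$ is ever defined, and each mechanism you sketch breaks at a specific point.

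\emph{Step 1 as stated is self-defeating.} Suppose one computable $f$ satisfies $f(\Psi(z)\upharpoonright n)=z\upharpoonright n$ for all $z$ and $n$, as ``uniform'' suggests. Then the sets $\{\Psi(z)\upharpoonright n: z\upharpoonright n=\tau\}$, for $\tau\in 2^n$, are $2^n$ pairwise disjoint nonempty subsets of $2^n$, hence singletons. So $\Psi$ induces a levelwise bijection, $\Psi(z)=_K z$ for every $z$, and nothing can be inserted. What you actually need is only that every prefix have boundedly many pre-images.

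\emph{The insertion points cannot be found.} Points read off from stages of the approximation to $K$, or from a left-c.e.\ supermartingale, are not decidable from $z$. Charging wrong guesses to the supermartingale's capital cannot produce a fixed $k^*$: along a 1-random $z$, the capital of the universal left-c.e.\ supermartingale is bounded only by a constant that depends on $z$.

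\emph{The direct separation has no mechanism.} Your van Lambalgen-type argument would require $K(\Psi(z)\upharpoonright n)\ge K(z\upharpoonright n)+c$ at lengths where $K(z\upharpoonright n)$ sits well below $n+K(n)$. Nothing in the plan lets a total functional locate or exploit those lengths.

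Your intuition of pulling later bits of $z$ forward into a low-information slot is the right one. The missing ideas are to recognize the slot by a locally decidable pattern rather than by compressibility of the prefix, and to prove strictness indirectly.

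The paper cuts $z$ into blocks $(x_i,y_i)$ with $|x_i|=|y_i|=m_i\approx\log_2 i$. It XORs the first $k$ bits of each $x_i$ with $a\in 2^k$, then applies a blockwise permutation $g_{m_i}$ sending $(0^{m},y)\mapsto(y,0^{m})$. This is $f_a$, and $k^*=2^k$ with $k=4$. Every prefix has at most five $f_a$-pre-images, so $z\le_K f_a(z)$ holds for every $z$ with no compression at all.

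Now assume $f_a(z)\le_K z$ for all $a$. At a rooted block, i.e.\ $x_i=a\cdot 0^{m_i-k}$, the assumption itself supplies the compression:
\[
K(z_{\le i})\le K\bigl(f_a(z_{<i})\cdot y_i\bigr)+O(1)\le K(z_{<i}\cdot x_i)+O_z(1)\le K(z_{<i})+O_{z,k}(1).
\]
This saves $2m_i$ bits. A stretch $(i,j)$ with no rooted block is compressible by counting, by about $2^{k-2}(m_j-m_i)$ bits. With $k=4$ these savings together beat the roughly $\tfrac32 m_j$ bits needed to name the block index $j$. This gives $K(z\upharpoonright n)-n\to-\infty$, contradicting 1-randomness. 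Hence some $f_a(z)\not\le_K z$, i.e.\ $z<_K f_a(z)$.

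This also shows that the $2^k$ functionals serve a quantitative purpose, namely making it expensive to avoid all rooted blocks. They are not a hedge over wrong guesses of a left-c.e.\ approximation.
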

Theorem \ref{maxKth1} is quite counter intuitive
since usually, $\h z$ being computable from $z$
indicates that $\h z$ is more simple than $z$.
The rest of this paper will prove Theorem \ref{maxKth0}, \ref{maxKth1}.

\ \\

\noindent\textbf{Intuition.}
Suppose adjacent strings $x,y\in 2^m$ appear in $z$.
i.e.,
$$z= ...x\cdot y...$$
Moreover, suppose $x$ is simple (say $x=0^m$) and $y$ is complex.
Then swapping $x,y$ in $z$ to get:
$$
\h z= ...y\cdot x...
$$
 will move forward the complexity peak.
More specifically, the $K$-complexity function of $z$ should
reach a low peak at the end of $x$ in $z$
(position $n_0$) and a high peak at the 
end of $y$ in $z$ (position $n_1$).
On the other hand, the $K$-complexity function of $\h z$
should reach a high peak at $n_0$ and a low peak at $n_1$.

\noindent\textbf{Organization.}
In section \ref{maxKsecframwork} we firstly describe the framework 
of the proof, a part of the construction of 
$(\Psi_{\h k}:\h k<k^*)$ and gives a key observation
(in more details than the above intuition).
In section \ref{maxKsecconstruct}
we concretely define the Turing functionals $\Psi_{\h k}$
and in section  \ref{maxKsecverify}
we verify that if none of the $K$-degree of 
$(\Psi_{\h k}(z):\h k<k^*)$  is strictly beyond that of $z$,
then the $K$-complexity function of $z$ is far below  $n\mapsto n$.
We end up by introducing some notations.

\noindent\textbf{Notation.}
We   write $\sigma\rho$
or $\sigma\cdot\rho$ (or occasionally $(\sigma,\rho)$) for the concatenation of $\sigma$ and $\rho$.
We write $\sigma\uhr n$ for the $n$-length prefix of $\sigma$
(if $n<|\sigma|$, then $\sigma\uhr n:= \sigma$).
 We write $O(1)$ for  absolute constant;
 write $O_v(1)$ for a constant depending on object $v$.
 
 \section{Proof of Theorem \ref{maxKth0}, \ref{maxKth1}}
 
 We start with an intuitive description of the proof
 and highlight where the difficulties are.
\subsection{Framework and observation.}
\label{maxKsecframwork}
 We will define
 computable functional $$f_a: 2^\omega\rightarrow 2^\omega$$
  indexed on $a\in 2^k$   ($k=4$ will be enough) 
 such that:
 \begin{align}\label{maxKeq0}
 &\text{$z\leq_K f_a(z)$
  for all $z\in 2^\omega$ and $a\in 2^k$;}\\ \nonumber
  &\text{If $f_a(z)\leq_K z$ for all $a\in 2^k$, then $\lim\limits_{n\rightarrow\infty }K(z\uhr n) -n\rightarrow -\infty$.}
 \end{align}
Since the $K$-complexity of  every 1-random real satisfy 
$K(x\uhr n)-n \geq O(1)$, therefore if $f_a(z)\leq_K z$ for all $a\in 2^k$, then 
the $K$-degree of $z$ is strictly below that of any 1-random real.
 Thus, (\ref{maxKeq0}) concludes the proof of Theorem \ref{maxKth0}.
 
 \ \\
 
 \noindent\textbf{What is $f_a$ and how it works.}
  For a real $z\in 2^\omega$,
 we regard $z$ as a sequence of blocks
 $z_1z_2\cdots$ where $z_i\in 2^{2m_i}$
 with $(m_i\in\omega:i\geq 1)$ a computable sequence 
 of non decreasing integers (we take $m_i:\approx [\log_2 i]$).
 Each block $z_i$ is seen as a pair $(x_i,y_i)$
 where $x_i,y_i\in 2^{m_i}$.
  We write $z_{\leq i}$ to denote the 
 string $ z_1 \cdots  z_i$ and similarly 
 for $z_{<i}$, $z_{[i,j)} = 
  z_i\cdots z_{j-1} $ or $z_{(i,j)}$ etc.

 A key ingredient in the construction of $f_a$ is the following
 function $g_m:2^{2m}\rightarrow 2^{2m}$,
 for $x,y\in 2^m$, let 
 \begin{align}\label{maxdefg_m}
 g_m(x,y):= \left\{
 \begin{aligned}
 &(y,0^m)&\text{ iff }x=0^m\\  
 &\text{to be specified} &\text{   otherwise.}
 \end{aligned}
 \right.
 \end{align}
 We define  $g$ on $z$ by applying $g_{m_i}$ on the $i^{th}$ block of $z$. i.e.,
 $$g(z):=  g_{m_1}(x_1,y_1)\cdot g_{m_2}(x_2,y_2) \cdots.$$
 For $a=0^k$, $g$ is the functional $f_a$.

Let's see why the complexity deficiency occurs under 
hypothesis  $g(z)\leq_K z$.
Suppose  $z_{\leq i} = z_{<i}\cdot (0^m,y)$.
We will make $g_m$ a permutation,
so the $|z_{<i}|$-length prefix of $g(z) $
(denoted as $\h z_{<i}$) is 
computable from $z_{<i}$ and $z_{<i} $ is also 
computable given $\h z_{<i}$.
This entails  $K(z_{<i}) \leq K(\h z_{<i})+O(1)$.
On the other hand,
using the hypothesis we have
\begin{align}\label{maxKkey}
K(z_{\leq i}) = K(z_{<i}\cdot (0^m, y)) &\leq K(\h z_{<i}\cdot y)+O(1)\\ \nonumber
&\leq K( z_{<i}\cdot 0^m)+O(1)
\leq K( z_{<i})+O(1).
\end{align} 
 The first inequality is because
 $z_{<i}\cdot (0^m,y)$ is computable 
 given $\h z_{<i}\cdot y$ 
 (since $(m_i:i\in\omega)$ is computable and $g_m$ is a permutation).
  The second inequality is due to $g(z)\leq_K z$. 
 
\ \\
 
 \noindent\textbf{What else do we need.}
 \begin{itemize}
 \item 
 We firstly need to complete the definition of $g_m$.
 In order to satisfy $z\leq_K f_a(z)$,
 we make $g_m$ a permutation on $2^{2m}$. Moreover,
  any $\rho\in 2^N$ admits at most $O(1)$ many $g$-pre-image.
  Here a $g$-pre-image of $\rho$ is a string $\sigma\in 2^N$
  such that for some $\h z\in [\sigma]$, $g(\h z)\succeq \rho$.
 This obviously suffices to entail $z\leq_K g(z)$.
 
 \item The deficiency in (\ref{maxKkey}) is the deficiency
 occurring around the block with a $0^m$-first component
  (call such block rooted).
 We need to push forward this deficiency towards
 a general position. However, for instance, what if $z$ does not 
 have any rooted. 
This is partly done by choosing $m_i$ to be approximately $[\log_2 i]$.
Such sequence $(m_i:i\in\omega)$ clearly makes it impossible to avoid
$0^m$-first component while being 1-random
since $$\prod_i (1-2^{-m_i}) = 0.$$
 However, this is not enough.
 To overcome the deficiency loss
  for a general position $z\uhr n$,
  we define more function $g$, namely $(f_a:a\in 2^k)$
  so that it makes $z$ to pay more complexity loss 
  when it avoid all rooted blocks (for $2^k$ many function $f_a$,
  the first component has to avoid $2^k$ many strings to avoid being 
  a rooted block).

 \end{itemize}

 \subsection{Construction of $f_a$.}
 \label{maxKsecconstruct}
 As mentioned in section,
 we firstly complete our definition (\ref{maxdefg_m}) of $g_m$
 to make it a permutation on $2^{2m}$
 and make $g$ having the $O(1)$ many pre-image property.
 For $x,y\in 2^m$, regard $x,y$ as binary expansion of natural numbers,
 we write $x+y, x+n $ (where $n$ is a natural number) for
  $$x+y(\mod 2^m), x+n(\mod 2^m) \text{ respectively }$$
  where the result is also a binary string.
Similarly for $x-y,x-n$.
 Let
 \begin{align}
 g_m(x,y):= 
 \left\{
 \begin{aligned}
 &(y,0^m)&\text{ iff }x=0^m\\ \nonumber
 &(x+1,y+1)&\text{ iff }y<x\\ \nonumber
 &(x,y)&\text{ iff }y\geq x>0.
 \end{aligned}
 \right.
 \end{align}
  Let $m_i:= [\log_2 i]\vee k+1$.
 For $z =  z_1 z_2 \cdots $ where $z_i =  x_i y_i $ where $x_i,y_i\in 2^{m_i}$
 Let $$g(z):=  g_{m_1}(x_1,y_1)\cdot  g_{m_2}(x_2,y_2) \cdots.$$
 Obviously $g$ is computable.
 For $\rho,\sigma\in 2^n$, 
 we say $\rho$ is a \emph{$g$-pre-image} of $\sigma$
 iff there exists $\h z\in [\rho]$ such that $g(\h z)\succ \sigma$.
 
 \begin{claim}\label{maxKclaim0}
 We have:
 \begin{enumerate}
 \item $g_m$ is a permutation on $2^{2m}$.
 \item For every $\sigma\in 2^n$, $\sigma$ admits 
 at most five $g$-pre-image of $\sigma$.
 \end{enumerate}
 \end{claim}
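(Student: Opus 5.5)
The plan for (1) is to split $2^{2m}$ into three regions according to the defining cases of $g_m$: $A=\{(x,y):x=0^m\}$, $B=\{(x,y):y<x\}$ (which forces $x\geq 1$), and $C=\{(x,y):y\geq x>0\}$. These are disjoint and cover $2^{2m}$. On each region $g_m$ is clearly injective: $(y,0^m)$ determines $y$, $(x+1,y+1)$ determines $(x,y)$, and the identity is injective. So it suffices to show that the three images are pairwise disjoint; since $2^{2m}$ is finite, bijectivity then follows.

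We compute the images, writing $(u,v)$ for an output pair.
\begin{itemize}
\item $g_m(A)=\{(u,0^m):u\in 2^m\}$.
\item For $B$ with $x<2^m-1$, we get $u=x+1\geq 2$ and $1\leq v=y+1\leq x<u$. For $x=2^m-1$ the first coordinate wraps, giving $u=0$ and $v=y+1\in[1,2^m-1]$. Hence $g_m(B)=\{(u,v):1\leq v<u\}\cup\{(0,v):v\geq 1\}$.
\item $g_m(C)=C=\{(u,v):1\leq u\leq v\}$.
\end{itemize}
For each fixed $v$ these three sets partition the possible values of $u$. If $v=0$, every $u$ lies in $g_m(A)$. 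If $v\geq 1$, we have $u=0$, or $u>v$, or $1\leq u\leq v$, and exactly one of these holds. So $g_m$ is a permutation.

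For (2), fix $\sigma\in 2^n$ and let $\rho\in 2^n$ be a $g$-pre-image. Since $g$ acts blockwise and $|g_{m_i}(x_i,y_i)|=2m_i$, $\sigma$ consists of some complete blocks $\sigma_1\cdots\sigma_{i-1}$ followed by a partial block of length $\ell<2m_i$. On each complete block, (1) forces $\rho$'s block to equal $g_{m_j}^{-1}(\sigma_j)$, so $\rho\uhr(n-\ell)$ is unique. It remains to count the possible length-$\ell$ prefixes of $(x,y)=(x_i,y_i)$ such that some completion satisfies $g_m(x,y)\succeq$ the last $\ell$ bits of $\sigma$. We treat each region separately and split according to whether $\ell\leq m$ or $\ell>m$. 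The only nontrivial point is the following fact: if only the top $j$ bits of $u$ (or of $v$) are known, then the top $j$ bits of $u-1$ take at most two values, namely the same prefix or that prefix minus $1$ modulo $2^j$, depending on whether a borrow propagates.

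Case $\ell\leq m$, so only a prefix of $u$ is known.
\begin{itemize}
\item Region $A$: $x=0^m$, so $\rho$'s prefix is $0^\ell$. This gives one option.
\item Region $B$: $x=u-1$, giving at most two options by the borrow remark.
\item Region $C$: $x=u$, giving one option.
\end{itemize}
This yields at most $4$ options.

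Case $\ell>m$, so $u$ is fully known along with a prefix of $v$ of length $\ell-m$.
\begin{itemize}
\item Region $A$: $(x,y)=(0^m,u)$, so the prefix is determined. This gives one option.
\item Region $B$: $x=u-1$ is determined and $y$'s prefix is a prefix of $v-1$, giving at most two options.
\item Region $C$: the prefix equals $\sigma$'s, giving one option.
\end{itemize}
This again yields at most $4$ options.

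Hence there are at most $4\leq 5$ $g$-pre-images. The main point of care is this partial-block bookkeeping, in particular the borrow/wrap-around under subtraction modulo $2^m$ when only high-order bits are known. It costs only a factor of two in region $B$, so the bound stays well within five.
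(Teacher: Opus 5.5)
Your proof is correct and follows essentially the paper's route. For (1), the paper exhibits an explicit preimage for each output pair, while you show that the images of the three defining regions are disjoint, which is the same case check read in the other direction; for (2), you reduce to the final partial block and list candidate prefixes region by region, exactly as the paper does, with your borrow remark making explicit where the paper's candidates $\sigma-1$ and $\sigma_y-1$ come from and your count of at most four being slightly sharper than the paper's five.
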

 \begin{proof}
 For item (1): 
It suffices to show every element of $2^{2m}$ admits a pre-image 
 under $g_m$.

 \noindent For $(x,y) $ with $y> x=0$, 
 the pre-image is $(1^m,y-1)$. 

 \noindent For $(x,y)$ with $y=x=0$,
 the pre-image of $(x,y)$ is $(0^m,0^m)$.
 
  \noindent For $(x,y)$ with $x>y=0$, the pre-image of 
 $(x,y)$ is $(y,x)$. 
 
  \noindent For $(x,y)$ with $y\geq x>0$, the pre-image of $(x,y)$
 is $(x,y)$. 
 
 \noindent For $(x,y)$ with $x>y>0$, the pre-image of $(x,y)$
 is $(x-1,y-1)$.

 \noindent Clearly  these are  all possibilities.
 
 For item (2): Since $g_m$ is a permutation on $2^{2m}$,
 it suffices to show the following.
 Let $\sigma\in 2^{\leq 2m}$, $\sigma$ admits at most five $g_m$-pre-images.
 
 \noindent If $|\sigma|\leq m$, then all possible
  $g_m$-pre-images of $\sigma$ are
  (depending on which cases of $g_m$ happens)
  $$
  0^{|\sigma|}, \sigma,\sigma-1.
  $$
  
  \noindent If $|\sigma|>m$,
  let $\sigma = \sigma_x\sigma_y$ where $\sigma_x\in 2^m$, then all possible $g_m$-pre-images are
  $$
  (0^m,\sigma_x\uhr |\sigma_y|), (\sigma_x,\sigma_y), (\sigma_x-1,\sigma_y),
  (\sigma_x,\sigma_y-1)\text{ and }
   (\sigma_x-1,\sigma_y-1).
  $$

 \end{proof}

From now on, fix a $k\geq 4$
($k=4$ will be enough). 
For $a_0,a_1\in 2$, write $a_0\Delta a_1$
for the XOR of $a_0,a_1$.
For $a\in 2^k$ and $\sigma\in 2^n$
 (where $n\geq k$), let $ a\xor \sigma$ be the bitwise $\xor$ 
 of $a,\sigma$:  
 $$(a(1)\xor\sigma(1))\cdot... \cdot (a(k)\xor\sigma(k))\cdot 
 \sigma(k+1)\cdots\sigma(n) .$$
  For $z =  z_1 z_2 \cdots $ where $z_i =  x_i y_i $ where $x_i,y_i\in 2^{m_i}$,
 let 
 $$h_a(z) =  (x_1\xor a, y_1)\cdot (x_2\xor a, y_2)\cdots.
 $$
 Define
 $$
 f_a(z):= (g\circ h_a)(z).
 $$
 Obviously $f_a$ is computable.
 Since every string admits at most five $g$-pre-image 
 (see Claim \ref{maxKclaim0}), we have
 \begin{claim}\label{maxKclaim1}
 For every $a\in 2^k$,
 every string admits at most five $f_a$-pre-image.
 Therefore, $z\leq_K f_a(z)$ for all $z\in 2^\omega$.
 \end{claim}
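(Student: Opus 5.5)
The claim has two parts: a bound on $f_a$-pre-images, and the consequence $z\leq_K f_a(z)$. I would prove the pre-image bound by factoring $f_a=g\circ h_a$ and reducing to Claim \ref{maxKclaim0}, and then derive the $K$-comparison by a standard decoding argument.

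\textbf{Step 1: $h_a$ is a length-preserving bijection on initial segments.} The map $h_a$ acts block by block. On the $i$-th block it XORs the first $k$ bits of $x_i$ with $a$; this makes sense because $m_i\geq k+1$. Hence for each $n$ it induces an involution $\tau\mapsto h_a(\tau)$ on $2^n$, where a partial last block is handled by XORing only its available first-component bits. In particular $h_a(\h z)\uhr n$ depends only on $\h z\uhr n$.

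\textbf{Step 2: transfer the pre-image bound.} Take $\rho,\sigma\in 2^n$ with $\rho$ an $f_a$-pre-image of $\sigma$. Then there is $\h z\in[\rho]$ with $g(h_a(\h z))\succ\sigma$. Since $h_a(\h z)\in[h_a(\rho)]$, the string $h_a(\rho)$ is a $g$-pre-image of $\sigma$. By Claim \ref{maxKclaim0}(2), at most five strings arise as $h_a(\rho)$, and because $h_a$ is injective on $2^n$, at most five strings $\rho$ occur.

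\textbf{Step 3: deduce $z\leq_K f_a(z)$.} Define a prefix-free machine $M$ as follows. On input $p\,q$, where $U(p)=\sigma\in 2^n$ and $q\in\{0,1\}^3$, it enumerates the $f_a$-pre-images of $\sigma$ and outputs the one indexed by $q$. To enumerate them, $M$ uses the fact that $g$ and $h_a$ are computable block maps with computable $(m_i)$. So $\rho\in 2^n$ is a pre-image of $\sigma$ iff some extension of $\rho$ to the end of the current block maps onto a string extending $\sigma$. This is a finite, decidable check, so the pre-image set is computable from $\sigma$. The machine $M$ is prefix-free because $U$ is and the suffix has fixed length.

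Now take $\sigma=f_a(z)\uhr n$. Its pre-images include $z\uhr n$, so
\[
K(z\uhr n)\leq K(f_a(z)\uhr n)+3+O(1).
\]

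\textbf{Main obstacle.} The only delicate point is the partial last block in Step 2. One must check that ``pre-image'' is compatible with $h_a$ when $n$ cuts through a block, including the case where the cut falls inside the first $k$ bits. Since the XOR is coordinatewise on a fixed prefix of each block, this is routine.
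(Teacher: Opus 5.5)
Your proposal is correct and takes the same route as the paper. The paper also gets the pre-image bound for $f_a=g\circ h_a$ directly from Claim \ref{maxKclaim0}(2), using that $h_a$ is a length-preserving bijection on each $2^n$, and treats the step from boundedly many pre-images to $z\leq_K f_a(z)$ as obvious; your Step~3 (a prefix-free machine reading a $U$-description of $f_a(z)\uhr n$ followed by a 3-bit index into the decidable pre-image set) just spells out that decoding argument.
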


 Thus, to prove Theorem \ref{maxKth0}, it remains to prove
 the following proposition:
 let $\Delta_K(\sigma):= K(\sigma)-|\sigma|$.
 \begin{proposition}\label{maxKlem0}
 Let $z\in 2^\omega$.
 If $f_a(z)\leq_K z$ for all $a\in 2^k$, then we have
 $$\Delta_K(z\uhr n)\rightarrow-\infty.$$ 
 \end{proposition}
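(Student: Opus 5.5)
We argue by contradiction in measure/coding form: assume $f_a(z)\leq_K z$ for all $a\in 2^k$ with a common constant $c$, and show that for every $d$ there is $N$ with $\Delta_K(z\uhr n)\leq -d$ for all $n\geq N$. The basic tool is the computation (\ref{maxKkey}), applied now to each $f_a$. Call block $i$ \emph{$a$-rooted} if $x_i\xor a=0^{m_i}$, i.e. $x_i$ begins with $a$ followed by zeros. If block $i$ is $a$-rooted, then $f_a(z)$ agrees on block $i$ with $(y_i,0^{m_i})$. Because $f_a$ is blockwise invertible, the same argument as in (\ref{maxKkey}) gives
\[
K(z_{\leq i})\leq K(z_{<i}\cdot x_i)+O(1)\leq K(z_{<i})+O(1),
\]
since $z_{<i}x_i$ is computable from $z_{<i}$ and $a$. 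Applying $f_a(z)\leq_K z$ at intermediate lengths $|z_{<i}|+j$ for $j\leq m_i$ gives more: all of the $m_i$ bits of $y_i$ are absorbed. So at an $a$-rooted block the complexity fails to grow by about $2m_i$ over a stretch of $2m_i$ bits.

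The second step is to globalize this. I would bound $K(z\uhr n)$ by an explicit description: a self-delimiting code that runs through the blocks up to $n$. For each rooted block it records only a flag and the choice of $a$ (using $k$ bits), and it pays nothing for that block's contents beyond what the hypothesis supplies. Non-rooted blocks are written literally, but since $x_i$ is known to avoid the $2^k$ strings $a0^{m_i-k}$, it can be coded with $\log_2(2^{m_i}-2^k)$ bits. This saves about $2^{k}2^{-m_i}\log e\approx 2^k/i$ bits per block.

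To make the hypothesis usable inside a single description, I would instead build a left c.e. supermartingale (equivalently a discrete semimeasure) $d$ on strings. In the spirit of Levin--Schnorr, $\Delta_K(z\uhr n)\to-\infty$ follows if some c.e. supermartingale satisfies $d(z\uhr n)\to\infty$, here suitably combined with $K$ itself. The supermartingale bets using $2^{-K}$ as its prior. On each block it bets the fraction $\approx 2^k 2^{-m_i}$ that $x_i$ is of the form $a0^{m_i-k}$, and if so it uses the hypothesis: it inflates $2^{-K(z_{<i}x_i y_i\uhr j)}$ relative to uniform, which is where the $c$-bounded inequality enters. Since $m_i\approx\log_2 i$, the per-block gains sum like $\sum_i 2^k/i$, which diverges. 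So the capital grows to infinity along $z$ whether rooted blocks occur (large gain) or not (a steady small gain from avoidance). This gives the required drift at block boundaries.

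The main obstacle is handling \emph{general} positions $n$, which may lie in the middle of a block, and in particular controlling the loss when $z$ hits a rooted block for some $a$. The difficulty is that betting on rooted blocks must not cost more than the gain from the avoidance drift. The harmonic growth $\sum 2^k/i$ is only logarithmic, and the intra-block fluctuation is about $m_i\approx\log i$, which is of the same order. This is presumably why $k\geq 4$ is needed: the $2^k$ rooted patterns must give a drift coefficient $2^k\log_2 e$ large enough to dominate the $O(m_i)$ mid-block error terms and the $O(k)$ cost of naming $a$. I would first prove the boundary statement with explicit constants, tracking the coefficient of $\log i$. I would then interpolate to arbitrary $n$ using $K(\sigma\tau)\leq K(\sigma)+|\tau|+O(\log|\tau|)$ together with the sharper in-block bounds coming from the hypothesis at rooted blocks.
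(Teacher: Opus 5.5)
\textbf{There is a genuine gap: the globalization step, which is the actual content of the proposition, is never carried out.} Your first step is exactly the paper's Lemma~\ref{maxKlemdeficiency}. You also correctly identify the quantitative ingredients: the saving $-\log_2(1-2^{k-m_i})$, of order $2^k/i$, on each non-rooted block, and the logarithmic cost of locating positions, which the drift must beat (hence $k\geq 4$). But the mechanisms you sketch for the globalization do not work as stated.

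\emph{The hypothesis is not a code.} $f_a(z)\leq_K z$ holds only along $z$ and only yields inequalities between $K$-values, such as $K(z_{\leq i})\leq K(z_{<i})+O(1)$. It is not a decoding procedure, so no single self-delimiting code can ``pay nothing'' for a rooted block. It can enter only the analysis of a c.e.\ strategy along $z$, never its definition, and you do not say which strategy you would analyse.

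\emph{The betting description is inconsistent.} Staking the fair fraction $2^k2^{-m_i}$ on rootedness gains nothing on a non-rooted block, because the cost of the outcome ``not rooted'' exactly cancels the saving on $x_i$. A $\sum_i 2^k/i$ drift therefore requires under-betting, and then you must show that rooted blocks repay the resulting loss. That is precisely the accounting you defer.

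\emph{The supermartingale reduction is false.} $d(z\uhr n)\to\infty$ for a c.e.\ supermartingale only gives that $z$ is not 1-random, i.e.\ $\liminf_n\Delta_K(z\uhr n)=-\infty$. The general bound is $\Delta_K(\sigma)\leq K(|\sigma|)-\log_2 d(\sigma)+O(1)$, so you would need $\log_2 d(z\uhr n)-K(n)\to\infty$.

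\textbf{The missing idea is to stay with $K$-inequalities and chain them from one rooted block to the next.} Let $\mathrm{NR}_{ij}$ be the set of strings whose blocks in $(i,j)$ are all non-rooted. If $i<j$ and no block in $(i,j)$ is rooted, then
$K(z_{<j})\leq K(z_{\leq i})+K(j)+\log_2|\mathrm{NR}_{ij}|+O(1)\leq K(z_{\leq i})+|z_{(i,j)}|+\frac32 m_j-2^{k-2}(m_j-m_i)+O_k(1)$.
In other words, you pay $\frac32\log_2 j$ \emph{once} to name the next rooted block, instead of flagging every block.

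If block $j$ is rooted, the deficiency lemma then absorbs $2m_j>\frac32 m_j$ bits. So the potential $\Delta_K(z_{\leq j})+2^{k-2}m_j$ is at most the previous one, $\Delta_K(z_{\leq i})+2^{k-2}m_i$, minus $m_j/2-O(1)$.

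An arbitrary $n$ in block $j$ is compared with the latest rooted block $i$ before it. For large $n$ this gives $\Delta_K(z\uhr n)\leq\Delta_K(z_{\leq i})+2^{k-2}m_i-\frac13 m_j$, since $2^{k-2}\geq 4$ beats $\frac32$. The cases of finitely many and infinitely many rooted blocks then both yield $\Delta_K(z\uhr n)\to-\infty$.
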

 \begin{proof}[Proof of Theorem \ref{maxKth0} assuming Proposition \ref{maxKlem0}]
 For any $z\in 2^\omega$, if $z<_K f_a(z)$ for some $a\in 2^k$, then we are done.
 We already have $z\leq_K f_a(z)$, so the only remaining case is
 $f_a(z)\leq_K z$ for all $a\in 2^k$. In this case the conclusion follows by Proposition \ref{maxKlem0}
 and noticing that for any 1-random real $\h z$,
 $\Delta_K(\h z\uhr n)\geq O(1)$
 (so the $K$-degree of $z$ is strictly below that of any 1-random real).
 
 \end{proof}
 
 \begin{proof}[Proof of Theorem \ref{maxKth1} assuming Proposition \ref{maxKlem0}]
 If $z$ is 1-random real, then $$\Delta_K(z\uhr n)\geq O(1).$$
 Thus, by Proposition \ref{maxKlem0}, it is not the case: $f_a(z)\leq_K z$ 
 for all $a\in 2^k$.
 Since $z\leq_K f_a(z)$ for all $a\in 2^k$,
 we have $z<_K f_a(z)$ for some $a\in 2^k$. 
 \end{proof}
 It remains to prove Proposition \ref{maxKlem0}.
 
 \subsection{Proof of Proposition \ref{maxKlem0}}
 \label{maxKsecverify}
 Fix  $z =  z_1 z_2 \cdots$ where $z_i =  x_i y_i$ where $x_i,y_i\in 2^{m_i}$.
 We say  block $i$ is \emph{rooted} iff for some $a\in 2^k$,
 $x_i= a\cdot 0^{m_i-|a|}$ (so $x_i\xor a = 0^{m_i}$).
 We firstly establish the key observation of complexity 
 deficiency 
 \begin{lemma}[Deficiency at rooted block]
 \label{maxKlemdeficiency}
If
 the $i^{th}$ block is rooted, then we have:
 $K(z_{\leq i})\leq K(z_{<i})+O_{z,k}(1)$.
 \end{lemma}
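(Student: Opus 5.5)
Under the standing hypothesis of Proposition \ref{maxKlem0} that $f_a(z)\leq_K z$ for all $a\in 2^k$, the plan is to run the chain of inequalities (\ref{maxKkey}) from the framework, with $g$ replaced by $f_a$ for the particular $a$ witnessing that block $i$ is rooted. The constant $O_{z,k}(1)$ comes from the finitely many constants in $f_a(z)\leq_K z$, one for each $a\in 2^k$. Fix $a$ with $x_i=a\cdot 0^{m_i-|a|}$, so $x_i\xor a=0^{m_i}$. Write $m=m_i$ and $N=|z_{<i}|=\sum_{j<i}2m_j$. Let $\h z=f_a(z)$ and $\h z_{<i}=\h z\uhr N$.

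First, $\h z_{<i}$ and $z_{<i}$ are mutually computable, uniformly in $a$. Both $h_a$ and $g$ act blockwise, each block map is a permutation of $2^{2m_j}$ (Claim \ref{maxKclaim0}(1), and XOR with $a$ is an involution), and $(m_j)$ is computable. So $K(z_{<i})$ and $K(\h z_{<i})$ differ by $O_k(1)$, and in particular $K(z_{<i})\leq K(\h z_{<i})+O_k(1)$.

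Next, since block $i$ of $h_a(z)$ is $(0^m,y_i)$, the first case of $g_m$ gives $f_a(z)\uhr (N+m)=\h z_{<i}\cdot y_i$. There is a partial computable map, uniform in $a$, sending $\h z_{<i}\cdot y$ to $z_{<i}\cdot(a\cdot 0^{m-|a|},y)$. It reads off $N$ and $m$ from the length, inverts the permutations on the first $i-1$ blocks, and appends $a\cdot0^{m-k}$ followed by $y$. This yields
\[
K(z_{\leq i})\leq K(\h z_{<i}\cdot y_i)+O_k(1)=K(f_a(z)\uhr(N+m))+O_k(1).
\]
The hypothesis $f_a(z)\leq_K z$ at length $N+m$ then gives $K(f_a(z)\uhr(N+m))\leq K(z\uhr (N+m))+O_{z,a}(1)$. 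Here $z\uhr(N+m)=z_{<i}\cdot a\cdot0^{m-k}$ is computable from $z_{<i}$ together with $a$, because $m=m_i$ is computable from $i$, and $i$ is computable from $N$. Hence $K(z\uhr(N+m))\leq K(z_{<i})+O_k(1)$.

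Chaining these three bounds gives $K(z_{\leq i})\leq K(z_{<i})+O_{z,k}(1)$, with the constant being the maximum over the $2^k$ choices of $a$. The only delicate point is making sure every translation step is uniform, so that the constants do not depend on $i$. This holds because each step decodes block boundaries from the computable sequence $(m_j)$ and the string length alone. So I expect no real obstacle beyond this bookkeeping.
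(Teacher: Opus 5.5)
Your proposal is correct and is essentially the paper's argument: decode $z_{\leq i}$ from $f_a(z)\uhr(|z_{<i}|+m_i)=f_a(z_{<i})\cdot y_i$, apply $f_a(z)\leq_K z$ at that length, and use that $z_{<i}\cdot x_i=z_{<i}\cdot a\cdot 0^{m_i-k}$ is computable from $z_{<i}$ and $a$. The only cosmetic difference is that the paper splits your single decoding step into two: first $K(z_{\leq i})\leq K(f_a(z_{\leq i}))+O(1)$ (Claim~\ref{maxKclaim1}), then $K(f_a(z_{<i})\cdot y_i\cdot 0^{m_i})\leq K(f_a(z_{<i})\cdot y_i)+O(1)$.
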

 \begin{proof}
 Since $(x_i,y_i)$ is rooted, say $x_i\xor a=0^{m_i}$, so (by definition of $f_a$) 
 $$f_a(z_{\leq i}) = f_a(z_{<i})\cdot g_m(x_i\xor a, y_i )
  = f_a(z_{<i})\cdot(y_i,0^{m_i}).$$
 Since $g_m$ is a permutation
 (so $f_a$ restricted on $2^{|z_{<i}|}$
 is also a permutation),
 we have $$
K( f_a(z_{<i}))= K(z_{<i}) +O(1).
 $$
 Now we have:
 \begin{align}
 &K(z_{\leq i})\\ \nonumber
 \text{by Claim \ref{maxKclaim1} }     \leq& K(f_a(z_{\leq i}))+O(1)\\ \nonumber
  =&  K\bigg(f_a(z_{<i})\cdot (y_i,0^{m_i})\bigg) 
  \\ \nonumber
  \leq& K\bigg(f_a(z_{<i})\cdot y_i\bigg)+O(1)\\ \nonumber 
 \text{by 
 $f_a(z)\leq_K z$\ \ \ \ } \leq & K( z_{<i} \cdot  x_i )+O_z(1)\\ \nonumber
 \leq& K( z_{<i})+O_{z,k}(1).
 \end{align}

 Thus we are done proving Lemma \ref{maxKlemdeficiency}.
 
 \end{proof}

 Using 
 Lemma \ref{maxKlemdeficiency}, we can derive how deficiency accumulates 
 at  rooted blocks.
 \begin{lemma}
 [Deficiency accumulation]
 \label{maxKlemdeficiency2}
 Let $i<j$ and suppose there are no rooted blocks
 with index  in $(i,j)$. 
 We have:
 \begin{align}\label{maxKdeficiencybeforerooted2}
K(z_{<j})-K(z_{\leq i})&\leq  |z_{(i,j)}|+ \frac{3}{2}m_j- 2^{k-2}(m_j-m_i)+O_k(1).
\end{align}
If the $j^{th}$ block is rooted, then we have
\begin{align}\label{maxKdeficiencyaccumulate}
\Delta_K(z_{\leq j})+2^{k-2}m_j&\leq \Delta_K(z_{\leq i})+ 2^{k-2}m_i
 -\frac{m_j}{2}+O_{z,k}(1).
\end{align}
Thus, 
\begin{align}\label{maxKdeficiencyaccumulation}
\lim_{j\rightarrow\infty, j^{th} \text{ block is rooted}}\Delta_K(z_{\leq j})+2^{k-2}m_j = -\infty.
\end{align}
 \end{lemma}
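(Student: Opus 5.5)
The plan is to prove (\ref{maxKdeficiencybeforerooted2}) by a counting/coding argument: we describe $z_{(i,j)}$ on top of a shortest description of $z_{\leq i}$. We then combine this with Lemma \ref{maxKlemdeficiency} to get (\ref{maxKdeficiencyaccumulate}), and iterate to get (\ref{maxKdeficiencyaccumulation}).

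\textbf{Step 1 (counting non-rooted blocks).} A block $l$ is non-rooted iff $x_l$ avoids the $2^k$ strings $a\cdot 0^{m_l-k}$ with $a\in 2^k$. So $(x_l,y_l)$ ranges over a set of size $2^{2m_l}(1-2^{k-m_l})$. Hence, given $i$ and $j$, the string $z_{(i,j)}$ lies in a computable set $S_{i,j}$ of size
\[
2^{|z_{(i,j)}|}\prod_{i<l<j}(1-2^{k-m_l}).
\]
Take $\log_2$ and use $\log_2(1-t)\leq -t\log_2 e$. Since $m_l=[\log_2 l]\vee k+1$, each value $m$ strictly between $m_i$ and $m_j$ is taken by about $2^{m-1}$ indices $l$, and each such level contributes at least about $2^{k-1}\log_2 e\geq 2^{k-2}$ to the deficit. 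The two partial levels at the ends cost only $O_k(1)$, or are absorbed by dropping one level. This gives
\[
\log_2|S_{i,j}|\leq |z_{(i,j)}|-2^{k-2}(m_j-m_i)+O_k(1).
\]
This is the step I expect to need the most care: the precise constant bookkeeping when $m_i=m_j$ or when the levels are only partially covered. There is plenty of slack, since $\log_2 e\cdot 2^{k-1}>2^{k-2}$.

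\textbf{Step 2 (coding).} Build a prefix-free machine that reads three things in turn: a $U$-program for $z_{\leq i}$, from whose output length it recovers $i$; a self-delimiting code for $j$ of length $K(j)$; and an index into $S_{i,j}$, whose length $\lceil\log_2|S_{i,j}|\rceil$ is computable from $(i,j)$. This gives
\[
K(z_{<j})\leq K(z_{\leq i})+K(j)+\log_2|S_{i,j}|+O_k(1).
\]
Since $\log_2 j\leq m_j+O(1)$, we have $K(j)\leq m_j+2\log_2 m_j+O(1)\leq \tfrac32 m_j+O(1)$. Combined with Step 1, this yields (\ref{maxKdeficiencybeforerooted2}).

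\textbf{Step 3 (accumulation).} If block $j$ is rooted, Lemma \ref{maxKlemdeficiency} gives $K(z_{\leq j})\leq K(z_{<j})+O_{z,k}(1)$. Now subtract $|z_{\leq j}|=|z_{\leq i}|+|z_{(i,j)}|+2m_j$ and insert (\ref{maxKdeficiencybeforerooted2}):
\[
\Delta_K(z_{\leq j})\leq \Delta_K(z_{\leq i})+\tfrac32 m_j-2m_j-2^{k-2}(m_j-m_i)+O_{z,k}(1).
\]
Rearranged, this is exactly (\ref{maxKdeficiencyaccumulate}). Here $i$ need not be rooted; the case where blocks $i$ and $j$ are adjacent is trivially included.

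For (\ref{maxKdeficiencyaccumulation}), apply (\ref{maxKdeficiencyaccumulate}) to consecutive rooted blocks $j_0<j_1<\cdots$, taking $i=j_{t}$ and $j=j_{t+1}$. The quantity $\Delta_K(z_{\leq j_t})+2^{k-2}m_{j_t}$ then decreases by at least $m_{j_{t+1}}/2-O_{z,k}(1)$ at each step. Since $m_{j_t}\to\infty$, the total decrease diverges, so the limit is $-\infty$. If there are only finitely many rooted blocks, the limit statement is vacuous, and that case will be handled separately by (\ref{maxKdeficiencybeforerooted2}) later.
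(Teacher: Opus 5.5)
Your proposal is correct and follows essentially the same route as the paper. You bound $\log_2$ of the number of non-rooted fillings of $z_{(i,j)}$, pay $K(j)\le\frac32 m_j$ to code $j$ on top of a program for $z_{\le i}$, then combine with Lemma \ref{maxKlemdeficiency} and telescope over consecutive rooted blocks; the level-by-level count and explicit prefix-free machine (in place of the harmonic sum $\sum 2^k/(2\hat i)$ and conditional-complexity chain) are presentational differences, and your telescoping makes explicit the paper's one-line justification of (\ref{maxKdeficiencyaccumulation}).
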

\begin{proof}   
 Let's look at $K(z_{<j})- K(z_{\leq i}) \leq  K(z_{<j}|z_{\leq i})+O(1)$:
 to describe $z_{<j}$ given $z_{\leq i}$, we only need
 \begin{itemize}
 \item a description of $j$;
 \item a description of $z_{(i,j)}$ (given $j,z_{\leq i}$).
 
 \end{itemize} 
 That is, 
 \begin{align}\label{maxKeq00}
 K(z_{<j}|z_{\leq i})\leq   K(z_{(i,j)}|j,z_{\leq i}) + K(j)+O(1).
 \end{align}
 For description of $j$, we simply take 
 \begin{align}\label{maxKeq1}
 K(j)\leq \frac{3}{2}\log_2 j+O(1)\leq \frac{3}{2}m_j+O(1).
 \end{align}
 For description of $z_{(i,j)}|j,i$, we take advantage of: blocks in  $z_{(i,j)}
 $ are non rooted. This provides a (conditional) description smaller than $|z_{(i,j)}|$.
 \def\NR{NR}
 Let $\NR_{ij}$ denote the set of string $\rho\in 2^{|z_{(i,j)}|}$
 such that the corresponding blocks in $\rho$ are non rooted \footnote{
 That is, if $\rho =  \h x_{i+1} \h y_{i+1} \cdots  \h x_{j-1} \h y_{j-1} $
 with $x_{\h i},y_{\h i}\in 2^{m_{\h i}}$, then 
 for every $a\in 2^k$, every $\h i\in (i,j)$, $\h x_{\h i}\ne a\cdot 0^{m_{\h i}-k}$.
 }.
 Since $\NR_{ij}$ is a  finite set 
 whose index is computable given $i,j$, we have for every $\sigma\in \NR_{ij}$:
\begin{align}\label{maxKeqnonrootcomplexity}
 K(\sigma|i,j)\leq& \log_2|\NR_{ij}|+O(1)\\ \nonumber
 =&  |z_{(i,j)}| +\sum_{\h i\in (i,j)} \log_2(1-\frac{2^k}{2^{m_{\h i}}})+O(1)\\ \nonumber
 \leq &|z_{(i,j)}|- \sum_{\h i\in (i,j)}\frac{2^k}{2^{m_{\h i}}}+O(1)\\ \nonumber
\text{recall $m_{\h i} = [\log_2\h i]\vee k+1$\ \ \ \ } \leq& |z_{(i,j)}|- \sum_{\h i\in (i,j)} \frac{2^k}{ 2\h i}+O_k(1)\\ \nonumber
 \leq& |z_{(i,j)}|- 2^{k-1}(\log(j)- \log(i))+O_k(1)\\ \nonumber
 \leq& |z_{(i,j)}|- 2^{k-2}(\log_2(j)- \log_2(i))+O_k(1)\\ \nonumber
 \leq&|z_{(i,j)}| -2^{k-2}(m_j-m_i)+O_k(1).
 \end{align}
In summary of (\ref{maxKeq00})(\ref{maxKeq1}) and (\ref{maxKeqnonrootcomplexity}),
we have
\begin{align}\label{maxKeqdeficiencybeforrooted}
K(z_{<j})-K(z_{\leq i})&\leq \frac{3}{2}m_j+O(1)+|z_{(i,j)}|- 2^{k-2}(m_j-m_i)+O_k(1)\\ \nonumber
& = |z_{(i,j)}|+ \frac{3}{2}m_j- 2^{k-2}(m_j-m_i)+O_k(1).
\end{align}
This verifies (\ref{maxKdeficiencybeforerooted2}).
Moreover, 
suppose $j^{th}$ block is rooted,
combine with  Lemma \ref{maxKlemdeficiency}, we have
\begin{align}
\Delta_K(z_{\leq j}) - \Delta_K(z_{\leq i})
 &\leq K(z_{<j})+O_{z,k}(1)-K(z_{\leq i}) - (|z_{\leq j}|-|z_{\leq i}|)\\ \nonumber
 &= K(z_{<j})+O_{z,k}(1)-K(z_{\leq i}) -|z_{(i,j)}| - 2m_j\\ \nonumber
 &\overset{(\ref{maxKeqdeficiencybeforrooted})}{\leq}
 -2^{k-2}(m_j-m_i)-\frac{1}{2}m_j+O_{z,k}(1)\\ \nonumber
 \Rightarrow 
 \Delta_K(z_{\leq j})+2^{k-2}m_j&\leq \Delta_K(z_{\leq i})+ 2^{k-2}m_i
 -\frac{m_j}{2}+O_{z,k}(1).
\end{align}
In a word, 
\begin{align}\nonumber
&\text{the deficiency loss due to description of $j$ (namely $\leq \frac{3}{2}m_j$)}\\ \nonumber
&\text{is compensated by 
  the deficiency increase
  (namely $ 2m_j$) }\\ \nonumber
  &\text{due to the rootedness 
of the $j^{th}$ block.}
\end{align}
The last part (\ref{maxKdeficiencyaccumulation})
simply follows by noticing $m_j\rightarrow\infty$.
Thus, we are done.

 \end{proof}
 
 Now we are ready to show $\Delta_K(z\uhr n)\rightarrow-\infty$.
  Let $n\in\omega$ be sufficiently large and $j$ be the block that $n$ locates
  (so $j, m_j$ are also sufficiently large).
 i.e., $$\sum_{\h i<j}2m_{\h i}< n \leq \sum_{\h i\leq j}2m_{\h i}.$$
 Let $i$ be the (index of)  latest rooted block of $z$ before the $j^{th}$ block,
 or $i=11$ if there is no such  block.
 We will use $K(z_{\leq i})$ to control $K(z\uhr n)$.
 Let $m$ be the coordinate of $n$ in the $j^{th}$
 block, namely $m:= n-\sum_{\h i<j}2m_{\h i}$.
 Note that given $z_{<j}$ to describe $z\uhr n$,
 we need a description of $m$ and a (conditional on $m$) description of 
 the last $m$ bits of $z\uhr n$
 (which in total is bounded by $m+\frac{3}{2}\log_2m$).
Therefore,
  \begin{align}
 K(z\uhr n) - K(z_{\leq i}) &\leq K(z\uhr n|z_{\leq i})+O(1)\\ \nonumber
 &\leq K(z\uhr n|z_{<j})+ K(z_{<j}|z_{\leq i})+O(1)\\ \nonumber
\text{by Lemma \ref{maxKlemdeficiency2}
(\ref{maxKdeficiencybeforerooted2})}\ \ \ \  &\leq m+\frac{3}{2}\log_2 m +  |z_{(i,j)}|+ \frac{3}{2}m_j- 2^{k-2}(m_j-m_i)+O_k(1)\\ \nonumber
 \text{since $m\leq 2m_j$\ \ \ \ }
 &\leq m+ \frac{3}{2}\log_2 m_j+ |z_{(i,j)}|+ \frac{3}{2}m_j- 2^{k-2}(m_j-m_i)+O_k(1)\\ \nonumber
 &=|z_{(i,j)}|+m+ \frac{3}{2}m_j- 2^{k-2}(m_j-m_i)+  \frac{3}{2}\log_2 m_j+O_k(1)
 \end{align}
 Thus, since  $z\uhr n$ is $(|z_{(i,j)}|+m)$-longer than $z_{\leq i}$,
   \begin{align}\nonumber
\Delta_K(z\uhr n) -\Delta_K(z_{\leq i})& = 
 K(z\uhr n) - K(z_{\leq i}) -(|z_{(i,j)}|+m)\\ \nonumber
 &\leq  \frac{3}{2}m_j- 2^{k-2}(m_j-m_i)+  \frac{3}{2}\log_2 m_j+O_k(1)\\ \nonumber
 \text{since $k\geq 4$
 and $j$ sufficiently large\ \ \ \ }&\leq 
 2^{k-2}m_i-\frac{1}{3}m_j
 \end{align}
 Therefore,
    \begin{align}\label{maxKeq22}
    \Delta_K(z\uhr n)\leq \Delta_K(z_{\leq i})+ 2^{k-2}m_i-\frac{1}{3}m_j.
    \end{align}

\noindent \textbf{Case 1:} $z$ have finitely many rooted blocks.
 In this case, $i$ is a constant depending on $z$ but not $n$,
 so $\Delta_K(z_{\leq i})+ 2^{k-2}m_i$ is a constant (independent of $n$).
 Thus, when $n\rightarrow\infty$ (so $m_j, j\rightarrow\infty$),
 by (\ref{maxKeq22}) we have
 $\Delta_K(z\uhr n)\rightarrow-\infty$.
 
 \noindent \textbf{Case 2:} $z$ have infinitely many rooted blocks.
 In this case $\Delta_K(z\uhr n)\rightarrow-\infty$ follows directly 
 from (\ref{maxKeq22}) and Lemma \ref{maxKlemdeficiency2} (\ref{maxKdeficiencyaccumulation}).

 \ \\
 
 Thus we are done
 proving Proposition \ref{maxKlem0}.
 
  \section*{Acknowledgement}
 This research is supported by  National Natural Science Foundation of China grant 12471002.
 The proof of Theorem \ref{maxKth0}
 is provided by AI tool, Rethlas (invoking Chatgpt 6 astra).
  Author read,  carefully checked, somewhat simplified  the proof given by AI.
 
 \bibliographystyle{amsplain}
\bibliography{C:/6+1/Draft/bibliographylogic}

\end{document}